\newtheorem{theorem}{Theorem}
\newtheorem{lemma}[theorem]{Lemma}
\newtheorem{definition}[theorem]{Definition}
\newtheorem{corollary}[theorem]{Corollary}
\begin{document}
\begin{center}
On the symbol calculus for multidimensional Hausdorff operators
\end{center}

\bigskip

\begin{center}
E. Liflyand
\end{center}
\begin{center}
Department of Mathematics, Bar-Ilan University, Ramat-Gan 52900, Israel; \quad e-mail: liflyand@gmail.com
\end{center}

\begin{center}
and
\end{center}

\begin{center}
A. Mirotin
\end{center}
\begin{center}
Department of Mathematics and Programming Technologies, Francisk
Skorina Gomel State University, Gomel, Belarus, and
Regional Mathematical Center, Southern Federal University, Rostov-on-Don, Russia; \quad e-mail: amirotin@yandex.by
\end{center}

\medskip

{\bf 2020 MSC:} Primary 47B38; Secondary 42B10

{\bf Keywords:} Hausdorff operator; commuting family; commutative algebra; symbol; matrix symbol; Fourier transform; convolution; positive definiteness; holomorphic function, fractional power

\bigskip

{\bf Abstract.} The aim of this work is  to derive a symbol calculus on $L^2(\mathbb{R}^n)$ for multidimensional  Hausdorff operators.
Two aspects of this activity result in two almost independent parts. While throughout the perturbation matrices are supposed to be self-adjoint
and form a commuting family, in the second part they are additionally assumed to be positive definite. What relates these two parts is the powerful
method of diagonalization  of a normal Hausdorff operator elaborated earlier by the second named author.

\bigskip

\section{Introduction}

Not diving too deep for the history of the modern Hausdorff operators on the Euclidean spaces, which usually starts with the reference to Hardy's book \cite{Ha},
where accurate reference to independent pioneer works by Rogosinski can be found (in fact, the same results were independently obtained by Garabedian), we instead refer the reader to the survey papers
\cite{emj} and \cite{CFW}. Even more recent and relevant is \cite{KL}, where an attempt is undertaken to figure out what the notion of Hausdorff operator in Euclidean spaces means. We will use a version of the definition given there.

\begin{definition} Let  $K$  be a locally Lebesgue integrable function on $\mathbb{R}^n$  and let $(A(u))$, with $u\in \mathbb{R}^n$, be a measurable family of real
$(n\times n)$-matrices in  the support of  $K$ almost everywhere defined and satisfying $\mathrm{det}A(u) \ne 0$.  We define the
\textit{Hausdorff  operator} $\mathcal{H}_{K, A}$ with kernel $K$  by

\begin{equation}\label{hodef} \mathcal{H}_{K, A}f(x) =\int_{\mathbb{R}^n} K(u)f(A(u)x)du,\ x\in\mathbb{R}^n\ \mbox{\rm is\ a\ column\ vector}.\end{equation}
\end{definition}

In the sequel, we shall assume  that  $|\det A(u)|^{-\frac12}K(u)\in L^1(\mathbb{ R}^n)$. This guarantees the boundedness of $\mathcal{H}_{K, A}$
in  $ L^2(\mathbb{ R}^n)$ and induces the introduction of the set

$${\mathcal A}_A:=\Big\{\mathcal{H}_{K, A}: |\det A(u)|^{-\frac12}K(u)\in L^1(\mathbb{ R}^n)\Big\}$$
of bounded operators in $ L^2(\mathbb{ R}^n)$.

One more assumption throughout the paper will be that the matrices $A(u)$ {\it are self-adjoint and  form a commuting family}. This implies that
there are an orthogonal $n\times n$-matrix $C$ and a family of diagonal non-singular real matrices $A'(u)={\rm diag}(a_1(u),...,a_n(u))$
such that $A'(u)=C^{-1}A(u)C$ for all $u\in \mathbb{R}^n$, where $A(u)$ is defined. By this, $a(u):=(a_1(u),...,a_{n}(u))$ is the family of all the
eigenvalues (with their multiplicities) of the matrix $A(u)$.

It is known that in this case the Hausdorff operator $\mathcal{H}_{K, A}$  in $L^2(\mathbb{R}^{n})$  is normal \cite{Forum}.

The aim of this work is  to extend the symbol calculus for one-dimensional Hausdorff operators on $L^2(\mathbb{R})$ elaborated in \cite{LM2}
to the multivariate case. It is not a plain business, and in order to use the approach in \cite{LM2} various constraints are posed on the
matrices $A(u)$. This leaves a room for further research in the cases where both the results and the methods of proof are questionable.

It is worth noting one more peculiarity of this work. In fact, a complete definition of Hausdorff operators in \cite{KL} suggests that
(\ref{hodef}) allows $u\in\mathbb{R}^m$, with $m$ not necessarily equal to $n$. To this end, we mention \cite{LiMi1}, where in certain
cases it is necessary that $m>n$. On the other hand, in some problems the case $m<n$ may also be meaningful; see, e.g., \cite{An}.

There are three main results in this work, Theorems \ref{thm1},  \ref{thm2}, and \ref{thm3}. We formulate, prove and discuss them and their
consequences in the two following sections.   All considerations are based on the diagonalization  of a normal Hausdorff operator
obtained in \cite{Forum} and \cite{faa}.

\section{The algebra ${\mathcal A}_A$}

Prior to the formulation and proof of the result in this section, we need more preliminaries. We split $\mathbb{R}^n$ into $2^n$
hyper-octants $\mathbb{R}^n_i$, fixing also an enumeration of this family. For every pair of the indices $(i,j)$, $i,j=1,\dots, 2^n$, there is a unique
$\varepsilon(i,j)\in\{-1,1\}^n$ such that

$$\varepsilon(i,j)\,\mathbb{R}^n_i:=\{(\varepsilon(i,j)_1\,u_1,...,\varepsilon(i,j)_n u_n): u\in\mathbb{R}^n_i\}=\mathbb{R}^n_j.$$
It is worth noting that $\varepsilon(i,j)\,\mathbb{R}^n_j=\mathbb{R}^n_i $ and $\varepsilon(i,j)\,\mathbb{R}^n_l\cap\mathbb{R}^n_i=\emptyset$ whenever $l\not=j$. We put

$$\Omega_{ij}:=\Big\{ u\in \mathbb{R}^n: \left(\mathrm{sgn}\,(a_1(u)),...,\mathrm{sgn}\,(a_n(u))\right)=\varepsilon (i,j)\Big\}.$$

For $\mathcal{H}_{K, A}\in {\mathcal A}_A$, assuming that

$$|a(u)|^{-\frac12 -\imath s}:=\prod\limits_{k=1}^n |a_k(u)|^{-\frac12-\imath s_k},\ s=(s_1,...,s_n)\in \mathbb{R}^n,$$
with
$$|a_k(u)|^{-\frac12-\imath s_k}:=e^{-(\frac12+\imath s_k)\log|a_k(u)|},$$
we define

$$\varphi_{ij}(s):=\int_{\Omega_{ij}} K(u)\,|a(u)|^{-\frac12-\imath s}\,du.$$
Obviously, each $\varphi_{ij}\in C_b(\mathbb{R}^n)$ and $\varphi_{ij}=\varphi_{ji}$.

Following \cite{Forum} (see also \cite{faa}) we are now in a position to define the {\it matrix symbol} of the Hausdorff operator $\mathcal{H}_{K, A}\in {\mathcal A}_A$ by

\begin{equation}\label{ms} \Phi=\Big(\varphi_{ij}\Big)_{i,j=1}^{2^n}.\end{equation}
By this, $\Phi$ is a symmetric element of the matrix algebra $\mathrm{Mat}_{2^n}\Big(C_b(\mathbb{R}^n)\Big)$.

We also need a property of the map defined above.

\begin{lemma}\label{lm1} The map $\mathrm{Smb}: \mathcal{A}_A\to \mathrm{Mat}_{2^n}(C_0(\mathbb{R}^n)),$
where $C_0(\mathbb{R}^n)$ stands for the algebra of continuous functions on $\mathbb{R}^n$ vanishing at infinity,
is an isometry, if we endow the algebra $ \mathrm{Mat}_{2^n}(C_0(\mathbb{R}^n))$ with the norm $\|\Phi\|=\sup_{s\in \mathbb{R}^n}\|\Phi(s)\|_{op}$.
\end{lemma}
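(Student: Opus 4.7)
The overall strategy is to reduce this lemma to the diagonalization of normal Hausdorff operators established in \cite{Forum, faa}. That result provides a unitary operator on $L^2(\mathbb{R}^n)$, built from the orthogonal change of basis by $C$ together with a multidimensional Mellin-type transform adapted to the hyper-octant decomposition of $\mathbb{R}^n$, which conjugates every $\mathcal{H}_{K,A}\in\mathcal{A}_A$ into multiplication by the matrix-valued function $\Phi(s)$ on an $L^2$-space of $\mathbb{C}^{2^n}$-valued functions of $s\in\mathbb{R}^n$. The entries $\varphi_{ij}$ will appear naturally because, after conjugating by $C$ to diagonalize the family and then passing to logarithmic coordinates on each octant, the action of $\mathcal{H}_{K,A}$ moves the $j$-th octant component into the $i$-th precisely from the portion $\Omega_{ij}$ of the integration variable.

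Once this unitary equivalence is in hand, the identity $\|\mathcal{H}_{K,A}\|_{op}=\mathrm{ess\,sup}_{s}\|\Phi(s)\|_{op}$ follows from the standard norm formula for a multiplication operator with operator-valued symbol, and linearity of $\mathrm{Smb}$ is immediate from the definition of $\Phi$. It remains to promote the essential supremum to an ordinary supremum, which I would do by verifying that each $\varphi_{ij}$ lies in $C_0(\mathbb{R}^n)$. For fixed $i,j$, on $\Omega_{ij}$ the signs of the $a_k(u)$ are all frozen, so the substitution $t_k=\log|a_k(u)|$ rewrites
\[ \varphi_{ij}(s)=\int_{\mathbb{R}^n} g_{ij}(t)\,e^{-i s\cdot t}\,dt \]
as the Fourier transform of the pushforward of $K(u)|\det A(u)|^{-1/2}\chi_{\Omega_{ij}}(u)\,du$. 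Provided this pushforward admits an $L^1$-density $g_{ij}$, the Riemann--Lebesgue lemma gives $\varphi_{ij}\in C_0(\mathbb{R}^n)$, after which the norm identity upgrades to $\|\mathcal{H}_{K,A}\|_{op}=\sup_{s}\|\Phi(s)\|_{op}$ and the isometric property of $\mathrm{Smb}$ follows.

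The hard part will be precisely this change-of-variables step: in general the pushforward under $u\mapsto(\log|a_1(u)|,\dots,\log|a_n(u)|)$ is only a finite complex measure, and a singular component could obstruct decay at infinity. What must be used here is some form of functional independence of the eigenvalue maps $a_k$, which either is already encoded in the standing measurability and non-degeneracy hypotheses on $A(u)$, or else should be extracted from the explicit diagonalization construction of \cite{faa} so that the $C_0$ property emerges directly from the diagonalization rather than being checked separately. Linearity and injectivity of $\mathrm{Smb}$ are routine to read off from the definition of $\Phi$, and therefore the whole statement will rest on combining the diagonalization with this one analytic step.
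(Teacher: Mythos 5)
Your proposal is correct and follows essentially the same route as the paper, which simply cites the diagonalization of \cite{Forum,faa} (the map $\mathcal{H}_{K,A}\mapsto M_{\Phi}$ onto multiplication operators on $L^2(\mathbb{R}^n,\mathbb{C}^{2^n})$ is an isometry) together with \cite[Corollary 3]{Forum} for $\|M_{\Phi}\|=\sup_{s}\|\Phi(s)\|_{op}$. The change-of-variables issue you flag as the hard part is exactly what the paper's standing assumption on the system \eqref{syeq} (unique solution $b(t)$ in each $\Omega_{ij}$, with Jacobian $J$) is designed to settle, yielding $\varphi_{ij}=\widehat{K_{ij}}$ with $K_{ij}\in L^1(\mathbb{R}^n)$; note also that to upgrade the essential supremum to a supremum the mere continuity of $\Phi$ (immediate from dominated convergence, and already recorded as $\varphi_{ij}\in C_b(\mathbb{R}^n)$) suffices --- the $C_0$ property is only needed for the stated codomain.
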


Here    $\|\cdot\|_{op}$ stands for the operator norm of a matrix as the norm of the operator of multiplication by this matrix.

\begin{proof} Let $ M_{\Phi}$ denote the operator of multiplication by the matrix function  $\Phi\in \mathrm{Mat}_{2^n}(C_0(\mathbb{R}^n))$ in the space of
vector valued functions $L^2(\mathbb{R}^n, \mathbb{C}^{2^n})$. It is known (see \cite{faa} and  \cite{Forum}) that the map $\mathcal{H}_{K,A}\mapsto M_{\Phi}$
is an isometry (with respect to operator norms) if $\Phi=\mathrm{Smb}(\mathcal{H}_{K,A})$. On the other hand,  $\| M_{\Phi}\|=\|\Phi\|$ by \cite[Corollary 3]{Forum}.
\end{proof}

In this section, we will  assume  that  for each pair of indices $i,j$, the system of equations

\begin{equation}\label{syeq}
|a_k(u)|=e^{t_k},  k=1,\dots, n,
\end{equation}
has the unique solution $u=(b_1(t),\dots,b_n(t))\in \Omega_{ij}$, $t=(t_1,...,t_n)$, for almost  every $t_k\in \mathbb{R}$.
Hence, we have a measurable map $ \mathbb{R}^n\to \Omega_{ij}$, $t\mapsto b(t)$, which is almost bijective.

Finally, we are ready to formulate and prove our first main result.

\begin{theorem}\label{thm1}   Under the above assumptions the set $\mathcal{A}_A$ is a non-closed  commutative subalgebra
without unit of the Banach algebra $\mathcal{L}(L^2(\mathbb{R}^n))$ of bounded operators on $L^2(\mathbb{R}^n)$.
\end{theorem}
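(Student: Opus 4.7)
The strategy is to transfer each claim, via the isometry of Lemma~\ref{lm1}, to a statement about the image of $\mathrm{Smb}$ inside $\mathrm{Mat}_{2^n}(C_0(\mathbb{R}^n))$. That $\mathcal{A}_A$ is a linear subspace is immediate, since $\mathcal{H}_{K,A}$ depends linearly on $K$ and the integrability condition $|\det A|^{-\frac12}K\in L^1$ is preserved under sums. Commutativity is a one-line Fubini argument exploiting $A(u)A(v)=A(v)A(u)$: the iterated integral $\iint K_1(u)K_2(v)f(A(u)A(v)x)\,du\,dv$ that represents $\mathcal{H}_{K_1,A}\mathcal{H}_{K_2,A}f(x)$ is symmetric in $K_1,K_2$.

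Closure under composition is the decisive step. I would use the unitary equivalence $\mathcal{H}_{K,A}\cong M_{\mathrm{Smb}(\mathcal{H}_{K,A})}$ from \cite{Forum, faa} that underlies Lemma~\ref{lm1}; combined with $M_{\Phi_1}M_{\Phi_2}=M_{\Phi_1\Phi_2}$, this forces the symbol of any composition in $\mathcal{A}_A$ to be the pointwise matrix product $\Phi_1\Phi_2$. The actual content is then to realize $\Phi_1\Phi_2$ as the symbol of some element of $\mathcal{A}_A$. Here assumption~(\ref{syeq}) enters crucially: the substitution $|a_k(u)|=e^{t_k}$, $u=b(t)$, identifies $\Omega_{ij}$ with $\mathbb{R}^n$ and rewrites each entry as a Fourier transform $\varphi_{ij}=\hat{g}_{ij}$, where $g_{ij}(t)=K(b(t))e^{-\frac12\sum t_k}|\det J_b(t)|$ belongs to $L^1(\mathbb{R}^n)$ precisely by the integrability hypothesis on $|\det A|^{-\frac12}K$. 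Because $\widehat{L^1(\mathbb{R}^n)}$ is closed under pointwise products (products going over to convolutions of $L^1$ functions), every entry of $\Phi_1\Phi_2$ is again a Fourier transform of an $L^1$ function, while the symmetry $(\Phi_1\Phi_2)_{ij}=(\Phi_1\Phi_2)_{ji}$ follows from the commutativity just established. Reversing the change of variables in each $\Omega_{ij}$ produces a kernel $K$ for the composition with $|\det A|^{-\frac12}K\in L^1$.

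The remaining two assertions follow directly from Lemma~\ref{lm1}. The identity operator would have the constant identity matrix as its symbol, whose diagonal entries equal the function $1\notin C_0(\mathbb{R}^n)$, ruling it out of $\mathcal{A}_A$. For non-closedness, the image of $\mathrm{Smb}$ consists of symmetric matrices with entries in $\widehat{L^1(\mathbb{R}^n)}$, a proper dense subspace of $C_0(\mathbb{R}^n)$; picking any $\psi\in C_0(\mathbb{R}^n)\setminus\widehat{L^1(\mathbb{R}^n)}$ and approximating it in the sup norm by functions $\hat g_n$ with $g_n\in L^1$ lifts, via the isometry, to a Cauchy sequence in $\mathcal{A}_A$ whose operator-norm limit has a symbol outside the image of $\mathrm{Smb}$ and hence does not lie in $\mathcal{A}_A$.

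I expect the main obstacle to be precisely the closure under composition, because a direct kernel-level realization of $\mathcal{H}_{K_1,A}\mathcal{H}_{K_2,A}$ as a single Hausdorff operator fails: the matrix product $A(u)A(v)$ need not belong to the parametrized family $\{A(w)\}$. Hypothesis~(\ref{syeq}) is exactly what converts every block of the symbol into a genuine Fourier transform on $\mathbb{R}^n$, letting pointwise matrix multiplication be read off as convolution of $L^1$ kernels and so furnishing the kernel of the composition.
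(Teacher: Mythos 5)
Your proposal is correct and follows essentially the same route as the paper: commutativity by Fubini, closure under products via the isometric and multiplicative symbol map together with the change of variables from \eqref{syeq} that turns each $\varphi_{ij}$ into the Fourier transform of an $L^1$ kernel (so that matrix products of symbols become convolutions, reversed to produce the kernel of the composition), non-closedness from the proper dense inclusion of the Wiener algebra in $C_0(\mathbb{R}^n)$ combined with Lemma~\ref{lm1}, and absence of a unit because the constant $1$ is not a Fourier transform of an $L^1$ function. Your explicit Fubini computation for commutativity and your remark on why a kernel-level composition fails are details the paper leaves implicit, but the argument is the same.
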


\begin{proof}  Straightforward calculations yield the commutativity of $\mathcal{A}_A$.

Putting  $|a_k(u)|=e^{t_k}$, we get, since
$$ |\det A(u)|=\prod_{k=1}^{n} |a_k(u)|=e^{\sum\limits_{k=1}^{n} t_k},$$
 that ($s\in\mathbb{R}^n$)
\begin{eqnarray*}
   \varphi_{ij}(s)&=\int_{\mathbb{R}^n}K(b_1(t),\dots, b_n(t))e^{-\frac{1}{2}\sum\limits_{k=1}^n t_k}|J(t)| e^{- \imath s\cdot t}\,dt=\widehat{K_{ij}}(s),
\end{eqnarray*}
where the ``hat'' stands for the Fourier transform, $t=(t_1,...,t_n)$,
$$
{K_{ij}}(t):=K(b_1(t),\dots, b_n(t))e^{-\frac{1}{2}\sum\limits_{l=1}^n t_l}|J(t)|,
$$
and $J(t):=\frac{\partial(b_1,\dots,b_n)}{\partial(t_1,\dots,t_n)}$ is the Jacobian.

Since the map

\begin{equation}\label{map}
\mathrm{Smb}:\mathcal{H}_{K,A}\mapsto \Phi, \quad \mathcal{A}_A\to \mathrm{Mat}_{2^n}(C_0(\mathbb{R}))
\end{equation}
is an isometry (and therefore, injective) and multiplicative (see \cite{Forum} for details), to prove that $\mathcal{A}_A$ is an algebra,
it suffices to show that the product of two symbols is also a symbol. More precisely, it suffices to show that
if  $\mathrm{Smb}(\mathcal{H}_{K,A})=\Phi$ and $\mathrm{Smb}(\mathcal{H}_{L,A})=\Psi$,
then $\Phi\Psi=\mathrm{Smb}(\mathcal{H}_{Q,A})$ for some $\mathcal{H}_{Q,A}\in \mathcal{A}_A$.

Denoting $\psi_{ij}$ and $L_{ij}$ for $\Psi$ similarly to $\varphi_{ij}$ and $K_{ij}$ for $\Phi$ and replacing the notation $\quad\widehat{}\quad$ for the Fourier transform by $\mathcal{F}$, we have

\begin{align*} \Phi\Psi&=(\varphi_{ij})(\psi_{ij})=\biggl(\,\sum\limits_{k=1}^{2^n} \varphi_{ik}\psi_{kj}\,\biggr)_{i,j=1}^{2^n}\\
&=\biggl(\,\mathcal{F}\Big(\,\sum\limits_{k=1}^{2^n} K_{ik}\ast L_{kj}\,\Big)\,\biggr)_{i,j=1}^{2^n},\end{align*}
where $\psi_{kj}=\widehat{L_{kj}}$, $L_{kj}\in L^1(\mathbb{R}^n)$, and $\ast$ denotes the convolution in $L^1(\mathbb{R}^n)$.

Defining the functions $Q_{ij}$ on $\mathbb{R}^n$ by

$$ Q_{ij}:=\sum\limits_{k=1}^{2^n} K_{ik}\ast L_{kj},$$
we arrive at

$$ \Phi\Psi=\Big(\widehat{Q_{ij}}\Big)_{i,j=1}^{2^n}.$$
Let now $Q$ be a function on $\mathbb{R}^n$ satisfying

$$Q_{ij}(t)=Q(b_1(t),\dots, b_n(t))e^{-\frac{1}{2}\sum\limits_{l=1}^n t_l}|J(t)|.$$
By this, $\Phi\Psi=\mathrm{Smb}(\mathcal{H}_{Q,A})$ in accordance with the above arguments.  
Since $Q_{ij}\in L^1(\mathbb{R}^n)$, we have $|\det A(u)|^{-\frac12}Q(u)\in L^1(\mathbb{ R}^n)$. Hence, $\mathcal{H}_{Q,A}\in \mathcal{A}_A$.

\medskip

Let us now proceed to the non-closedness. To this end, we choose
a sequence of kernels $K_\nu$ satisfying $|\det A(u)|^{-\frac12}K_\nu(u)\in L^1(\mathbb{ R}^n)$ and such that the sequence of Fourier transforms $\widehat{K_{\nu,11}}$
converges to a function in $C_0(\mathbb{R}^n)\setminus W_0(\mathbb{R}^n)$ uniformly on $\mathbb{R}^n$. Here $ W_0(\mathbb{R}^n)$ denotes the Wiener
algebra of Fourier transforms of functions in $L^1(\mathbb{R}^n)$; for a comprehensive survey, see \cite{LST}. Assume that the sequence of operators $\mathcal{H}_{K_\nu,A}$
converges to an operator  $\mathcal{H}_{L,A}$ in $ \mathcal{A}_A$ in the operator norm. Then by Lemma \ref{lm1}, the  sequence  of symbols
$\mathrm{Smb}(\mathcal{H}_{K_\nu,A})$ converges in the norm $\|\cdot\|_{op}$ to $\mathrm{Smb}(\mathcal{H}_{L,A})$ uniformly on $\mathbb{R}^n$.
Since the convergence in the norm in a finite-dimensional space implies the coordinate-wise convergence, this implies that $\widehat{K_{\nu,11}}$ converges (at least point-wise) to  $\widehat{L_{11}}\in W_0(\mathbb{R}^n)$  on $\mathbb{R}^n$, and we arrive at a contradiction.

Finally,  let $\mathcal{H}_{K,A}=I$, the identity operator for some $\mathcal{H}_{K,A}\in \mathcal{A}_A$. Then $\mathrm{Smb}(\mathcal{H}_{K,A})=E_{2^n}$
(the unit  matrix of order $2^n$). Therefore, $\widehat{K_{ii}}(s)=1$, a contradiction.    This completes the proof.
   \end{proof}

\begin{corollary} The normed algebra $\mathcal{A}_A$ is not Banach.   \end{corollary}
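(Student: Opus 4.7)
The plan is to read off the corollary as an immediate consequence of the non-closedness assertion already established in Theorem \ref{thm1}. The standing setting is that $\mathcal{A}_A$ sits inside $\mathcal{L}(L^2(\mathbb{R}^n))$ and inherits its operator norm. Since $\mathcal{L}(L^2(\mathbb{R}^n))$ is a Banach space in this norm, the usual metric-space fact applies: a normed subspace of a Banach space is complete precisely when it is closed in the ambient norm topology.

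So the argument I would write is: suppose, towards a contradiction, that $\mathcal{A}_A$ is a Banach algebra under the operator norm. Then $\mathcal{A}_A$ is a complete subspace of the Banach space $\mathcal{L}(L^2(\mathbb{R}^n))$, hence closed in it. But Theorem \ref{thm1} asserts that $\mathcal{A}_A$ is \emph{not} closed in $\mathcal{L}(L^2(\mathbb{R}^n))$; concretely, the sequence $\mathcal{H}_{K_\nu,A}$ constructed there is Cauchy in $\mathcal{A}_A$ yet has no limit inside $\mathcal{A}_A$ (its putative limit would force a function in $C_0(\mathbb{R}^n)\setminus W_0(\mathbb{R}^n)$ to lie in $W_0(\mathbb{R}^n)$). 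This contradiction proves the corollary.

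There is no substantive obstacle to overcome; the only thing to verify is that the norm on $\mathcal{A}_A$ mentioned in the corollary is indeed the operator norm inherited from $\mathcal{L}(L^2(\mathbb{R}^n))$, which is clear from the definition of $\mathcal{A}_A$ in Section~\ref{intro-like} of the paper and from the use of the operator norm throughout the statement and proof of Theorem \ref{thm1}. The proof is therefore a single short paragraph invoking the non-closedness just established, together with the completeness of $\mathcal{L}(L^2(\mathbb{R}^n))$.
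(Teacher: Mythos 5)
Your argument is correct and is exactly the (implicit) justification the paper intends: the corollary is stated without proof as an immediate consequence of the non-closedness in Theorem~\ref{thm1}, via the standard fact that a complete subspace of the Banach space $\mathcal{L}(L^2(\mathbb{R}^n))$ must be closed. The only caveat is your parenthetical claim that the sequence $\mathcal{H}_{K_\nu,A}$ is Cauchy without a limit in $\mathcal{A}_A$ --- the paper's proof of Theorem~\ref{thm1} does not actually verify Cauchyness --- but your main line of reasoning does not depend on it.
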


The above treatment leads to the open

\noindent {\bf Problem.} What is the closure of   $\mathcal{A}_A$ in the uniform operator topology?

\section{Functions of a Hausdorff operator}

We begin with an assumption used throughout the preceding section. However, it is simplified here because
of the positive definiteness condition posed on the matrices.

Let   $a(u):=(a_1(u),\dots,a_n(u))$ be the family of eigenvalues  (with their multiplicities) of a  {\it positive definite}  matrix $A(u)$.
We will assume in this section that the system of equations
$$
a_k(u)=e^{t_k},  k=1,\dots, n
$$
has the unique solution $u=(b_1(t),\dots,b_n(t))$, $t=(t_1,...,t_n)$, for almost  every $t_k\in \mathbb{R}$.
Again, we have a measurable map $ b:\mathbb{R}^n\to  \mathbb{R}^n$,  which is almost bijective.

\medskip

\subsection{Holomorphic functions of a Hausdorff operator}

\quad

The following theorem is a variant of functional calculus for Hausdorff operators.

\begin{theorem}\label{thm2} Let $K$ and $A$ satisfy the conditions listed above and let each matrix $A(u)$ be positive definite.
If a function $F$ is holomorphic in the neighborhood $E$ of the spectrum  $\sigma(\mathcal{H}_{K,A})$ and $F(0)=0$, then $F(\mathcal{H}_{K,A})$
is also a  Hausdorff operator of the form $\mathcal{H}_{K_F,A}$ bounded in $L^2(\mathbb{R}^n)$.
\end{theorem}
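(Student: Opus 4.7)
The plan is to exploit the positive definiteness of the matrices $A(u)$ in order to collapse the matrix symbol to a scalar-valued function, and then to invoke a holomorphic functional calculus inside the non-unital Wiener algebra $W_0(\mathbb{R}^n)$.

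First, since every $A(u)$ is positive definite, one has $\mathrm{sgn}\,a_k(u)=1$ for every $k$, so $\Omega_{ij}=\mathbb{R}^n$ when $i=j$ (the case $\varepsilon(i,j)=(1,\ldots,1)$) and $\Omega_{ij}=\varnothing$ otherwise. Hence the matrix symbol collapses to $\Phi(s)=\varphi(s)E_{2^n}$, with
$$\varphi(s)=\int_{\mathbb{R}^n}K(u)\,a(u)^{-\frac12-\imath s}\,du=\widehat{K_1}(s),$$
where, via the substitution $a_k(u)=e^{t_k}$ used in the proof of Theorem \ref{thm1},
$$K_1(t)=K(b(t))\,e^{-\frac12\sum_{l=1}^{n}t_l}\,|J(t)|\in L^1(\mathbb{R}^n).$$
Consequently $\varphi$ lies in the Wiener algebra $W_0(\mathbb{R}^n)$. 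By the diagonalization in \cite{Forum,faa}, $\mathcal{H}_{K,A}$ is unitarily equivalent to multiplication by $\varphi(s)E_{2^n}$ on $L^2(\mathbb{R}^n,\mathbb{C}^{2^n})$; in particular $\sigma(\mathcal{H}_{K,A})=\overline{\varphi(\mathbb{R}^n)}$, and this set contains $0$ since $\varphi\in C_0(\mathbb{R}^n)$.

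The decisive step is to find $K_1^F\in L^1(\mathbb{R}^n)$ with $\widehat{K_1^F}=F\circ\varphi$. The spectrum of $\varphi$ in the non-unital commutative Banach algebra $W_0(\mathbb{R}^n)$ equals $\overline{\varphi(\mathbb{R}^n)}\cup\{0\}$, which is contained in the open set $E$ on which $F$ is holomorphic. Since $F(0)=0$, the holomorphic functional calculus for non-unital commutative Banach algebras, i.e.\ a multidimensional Wiener--L\'evy theorem, then yields $F\circ\varphi\in W_0(\mathbb{R}^n)$. Concretely, one may realise $F(\varphi)$ through the Cauchy--Dunford contour integral $\frac{1}{2\pi\imath}\oint_\Gamma F(\lambda)(\lambda-\varphi)^{-1}d\lambda$, where $\Gamma\subset E$ encloses $\overline{\varphi(\mathbb{R}^n)}\cup\{0\}$, the assumption $F(0)=0$ being exactly what keeps the resulting element inside the non-unital algebra. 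Inverting the Fourier transform supplies the required $K_1^F$.

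To finish, write $t(u):=(\log a_1(u),\ldots,\log a_n(u))$ and set
$$K_F(u):=K_1^F(t(u))\,|\det A(u)|^{\frac12}\,|J(t(u))|^{-1}.$$
A direct change of variables gives $|\det A(u)|^{-1/2}K_F(u)\in L^1(\mathbb{R}^n)$, hence $\mathcal{H}_{K_F,A}\in\mathcal{A}_A$, and by construction its matrix symbol equals $(F\circ\varphi)E_{2^n}$. Combining Lemma \ref{lm1} with the functional calculus for the multiplication operator shows that both $\mathcal{H}_{K_F,A}$ and $F(\mathcal{H}_{K,A})$ are unitarily equivalent to multiplication by $(F\circ\varphi)E_{2^n}$ on $L^2(\mathbb{R}^n,\mathbb{C}^{2^n})$, and therefore coincide. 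The main obstacle lies precisely in the middle step: establishing that $W_0(\mathbb{R}^n)$ is closed under left composition with holomorphic functions satisfying $F(0)=0$ and controlling the resolvent $(\lambda-\varphi)^{-1}$ uniformly on $\Gamma$; this is where the non-unital structure of $W_0(\mathbb{R}^n)$ makes the normalisation $F(0)=0$ indispensable rather than merely convenient.
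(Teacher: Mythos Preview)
Your proposal is correct and follows essentially the same route as the paper's proof: reduce to the scalar symbol $\varphi=\widehat{L}$ via the positive definiteness of $A(u)$, invoke the unitary equivalence $\mathcal{H}_{K,A}=\mathcal{V}^{-1}M_{\varphi}\mathcal{V}$, apply a Wiener--L\'evy argument (the paper cites Rudin's Theorem~6.2.4) to obtain $F\circ\varphi=\widehat{Q_F}\in W_0(\mathbb{R}^n)$, and then define $K_F$ by the same change-of-variables formula you wrote. Your exposition is slightly more explicit about why the matrix symbol collapses to a scalar and about the role of the hypothesis $F(0)=0$ in the non-unital algebra $W_0(\mathbb{R}^n)$, but the underlying argument is identical.
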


\begin{proof} Let
 $$
   \varphi_{K,A}(s):=\int_{\mathbb{ R}^n}K(u)a(u)^{-\frac12- \imath s}du, \quad s=(s_1,...,s_n)\in \mathbb{R}^n,
   $$
stand for  the symbol of the Hausdorff operator $\mathcal{H}_{K, A}$ in $L^2(\mathbb{R}^n)$ \cite{faa, Forum}.

Recall that, by our definitions,  $a(u)^{-\frac12-\imath s}:=$ $\prod\limits_{k=1}^n a_k(u)^{-\frac12-\imath s_k}$,
 where $a_k(u)^{-\frac12-\imath s_k}:=$ $\exp((-\frac12-\imath s_k)\log a_k(u))$.

Note that
\begin{eqnarray*}
   \varphi_{K,A}(s)&=&\int_{\mathbb{ R}^n}K(u)\prod_{k=1}^na_k(u)^{-\frac12} e^{- \imath s_k\log a_k(u)}du\\
   &=&\int_{\mathbb{ R}^n}K(u)(\det A(u))^{-\frac12} e^{- \imath s\cdot \log a(u)}du,
\end{eqnarray*}
where
$$s\cdot \log a(u):=\sum_{k=1}^ns_k \log a_k(u).
$$
 Putting  $a_k(u)=e^{t_k}$,  with $t=(t_1,...,t_n)\in \mathbb{R}^n$,  we get,
since $\det A(u)=\prod\limits_{k=1}^na_k(u)=e^{\sum\limits_{k=1}^n t_k}$, that
\begin{eqnarray*}
   \varphi_{K,A}(s)&=\int_{\mathbb{ R}^n}K(b_1(t),\dots, b_n(t))e^{-\frac{1}{2}\sum_{k=1}^nt_k}|J(t)| e^{- \imath s\cdot t}dt=\widehat L(s),
\end{eqnarray*}
where
$$
L(t):=K(b_1(t),\dots, b_n(t))e^{-\frac{1}{2}\sum\limits_{k=1}^n t_k}|J(t)|,
$$
and $J(t):=\frac{\partial(b_1,\dots,b_n)}{\partial(t_1,\dots,t_n)}$ is the Jacobian.

According to \cite[Theorem 1]{faa}, \cite{Forum}, the  Hausdorff operator $\mathcal{H}_{K,A}$   in $L^2(\mathbb{R}^n)$   is
unitary equivalent to the operator $M_{\varphi_{K,A}}$  of coordinate-wise multiplication by  $\varphi_{K,A}$ in the space
$L^2(\mathbb{R}^n,\mathbb{C}^{2^n})$ of $\mathbb{C}^{2^n}$-valued functions. More precisely,  $\mathcal{H}_{K, A}=\mathcal{V}^{-1}
M_{\varphi_{K,A}} \mathcal{V}$, where  $\mathcal{V}$ is a unitary operator between $L^2(\mathbb{R}^n)$ and $L^2(\mathbb{R}^n,\mathbb{C}^{2^n})$
independent of $K$.  Moreover, the   spectrum $\sigma(\mathcal{H}_{K, A})$ is equal to the closure of the range of  the   symbol $\varphi_{K,A}$.

Then, as in \cite[Theorem 2]{LM2}, we have
\begin{align}\label{F(H)}
F(\mathcal{H}_{K,A})&=\frac{1}{2\pi \imath}\int_{\Gamma} F(\lambda)(\lambda-\mathcal{H}_{K,A})^{-1}d\lambda \nonumber\\
&=\frac{1}{2\pi \imath}\int_{\Gamma} F(\lambda)(\lambda-\mathcal{V}^{-1}M_{\varphi_{K,A}}\mathcal{V})^{-1}d\lambda \nonumber\\
&=\mathcal{V}^{-1}\left(\frac{1}{2\pi \imath}\int_{\Gamma} F(\lambda)(\lambda - M_{\varphi_{K,A}})^{-1}d\lambda\right)\mathcal{V} \nonumber\\
&=\mathcal{V}^{-1}F(M_{\varphi_{K,A}})\mathcal{V}=\mathcal{V}^{-1}M_{F(\varphi_{K,A})}\mathcal{V},
\end{align}
where  $\Gamma$ is the boundary of any open neighborhood  of the  set  $\sigma(\mathcal{H}_{K,A})$ such that $E$ contains its closure.

Here the identity
\[
(\lambda-\mathcal{V}^{-1}M_{\varphi_{K,A}}\mathcal{V})^{-1}=(\mathcal{V}^{-1}\lambda \mathcal{V}-\mathcal{V}^{-1}M_{\varphi_{K,A}}\mathcal{V})^{-1}=
\mathcal{V}^{-1}(\lambda-M_{\varphi_{K,A}})^{-1}\mathcal{V}
\]
and properties of the Bochner integral are used.

To finish the proof, it remains to show that $F(\varphi_{K,A})$ is the symbol of some Hausdorff operator $\mathcal{H}_{K_F,A}$ (this operator is bounded
in $L^2(\mathbb{R}^n)$ by the holomorphic functional calculus \cite{DunSw}).

To this end, observe that $F(\varphi_{K,A})\equiv F(\widehat L)=\widehat {Q_F}$ for some $Q_F\in L^1(\mathbb{R}^n)$ (see, e.~g., \cite[Theorem 6.2.4]{Rudin}; this theorem is applicable, since $F$ is holomorphic on the open set $E$, which contains the closure of the range of $\varphi_{K,A}$). Thus, if we denote $\log a(u):=(\log a_1(u),\dots,\log a_n(u))$ and put
\begin{eqnarray}\label{KF}
K_F(u):=(\det A(u))^{\frac12}\frac{Q_F(\log a(u))}{|J(\log a(u))|},
\end{eqnarray}
then
$$
K_F(b_1(t),\dots,b_n(t))=e^{\frac12\sum\limits_{k=1}^{n}t_k}\frac{Q_F(t)}{|J(t)|},
$$
and
\begin{align}\label{phiK}
   \varphi_{K_F,A}(s)&=\int_{\mathbb{ R}^n}K_F(u)a(u)^{-\frac12- \imath s}du\nonumber\\
&=\int_{\mathbb{ R}^n}K_F(b_1(t),\dots, b_n(t))e^{(-\frac12)\sum_{k=1}^nt_k}|J(t)| e^{- \imath s\cdot t}dt \nonumber\\
&=\widehat Q_F(s)=F(\varphi_{K,A})(s).
 \end{align}

Therefore, by \eqref{F(H)}, we have $F(\mathcal{H}_{K,A})=\mathcal{H}_{K_F,A}$, as desired.        \end{proof}

\bigskip

{\bf Example 1}. Let $F(z)=z^l, l\in \mathbb{N}, l\ge 2$.  Then $\mathcal{H}_{K,A}^l$ equals to some Hausdorff operator $\mathcal{H}_{K_l,A}$
with a scalar symbol $\widehat Q_l= \varphi_{K,A}^l$, by \eqref{phiK}.

We now consider the  averaging operator of Boyd \cite{Boyd1}, \cite{Boyd}

\[
P_\alpha f(x)=x^{\alpha-1}\int_0^xt^{-\alpha}f(t)dt=\int_0^1u^{-\alpha}f(ux)\,du,
\]
where $P_0=\mathcal{C}$ is the continuous Ces\'{a}ro operator. For $\alpha<\frac12$, this is a bounded Hausdorff operator in $L^2(\mathbb{R})$ with
the kernel $K(u)=\chi_{(0,1)}(u)u^{-\alpha}$, where $\chi$ stands for the indicator function of the set indicated as a subscript, and $A(u)=a(u)=u$. Its symbol is
\[
\varphi_{P_\alpha}(s)=\int_{\mathbb{R}}\chi_{(0,1)}(u)u^{-\alpha-\frac{1}{2}-\imath s}du=\frac{1}{(\frac{1}{2}-\alpha)- \imath s}.
\]
As mentioned above,
\[
\widehat Q_l(s)= \varphi_{P_\alpha}^l(s)=\frac{1}{((\frac{1}{2}-\alpha)- \imath s)^l}.
\]
Formula (3) in \cite[Ch. III, \S 3.2]{BE} yields 
\[
Q_l(t)=\frac{(-1)^{l-1}}{(l-1)!}t^{l-1}e^{(\frac{1}{2}-\alpha)t}\chi_{(-\infty,0)}(t).
\]

Since in our case $J(t)=e^t$, formula \eqref{KF} implies
\begin{eqnarray*}
K_l(u)&=&u^{\frac{1}{2}}\frac{Q_l(\log u)}{J(\log u)}\\
&=&\frac{u^{\frac{1}{2}}}{u}\frac{(-1)^l}{(l-1)!}(\log u)^{l-1}e^{(\frac{1}{2}-\alpha)\log u}\chi_{(-\infty,0)}(\log u)\\
&=&\frac{1}{(l-1)!}\left(\log\frac{1}{u}\right)^{l-1}u^{-\alpha}\chi_{(0,1)}(u).
\end{eqnarray*}
Thus,
\[
P_\alpha^l f(x)=\frac{1}{(l-1)!}\int_0^1u^{-\alpha}\left(\log\frac{1}{u}\right)^{l-1}f(ux)du.
\]
For $x> 0$, this is a formula of Boyd \cite[Lemma 2]{Boyd}\footnote{Boyd's formula is important for the study of the dynamics of $\mathcal{C}$,
see, e.~g. \cite{GFS} and the bibliography therein.}. As follows from our considerations, it is valid for all $f\in L^2(\mathbb{R})$ if $\alpha<\frac12$.
Since for $\alpha<1-\frac1p$, the operator $P_\alpha$ is bounded in $L^p(\mathbb{R})$ with  $p\in(1,\infty)$, by the Minkowski inequality,
for such $\alpha$ Boyd's formula is valid for all $f\in L^p(\mathbb{R})$, $p\in(1,\infty)$, as well.

\medskip

\subsection{Fractional powers of Hausdorff operator}

\quad

Let $\rm{Re}\,\alpha>0 $. Since the function $z^\alpha$ is not holomorphic in any neighborhood of zero, the approach of the previous subsection
is not applicable to the case where $0\in\sigma(\mathcal{H}_{K,A})$ and needs a special treatment.

For fractional power of a non-negative bounded operator $B$ in the Hilbert space $\frak{H}$, we will make
use of the following formula \cite[Ch. 3, Proposition 3.1.1; Ch. 5, Definition 5.1.1]{MS}). For every positive integer $m>\rm{Re}\,\alpha$,
$$
B^\alpha f=\frac{\Gamma(m)}{\Gamma(\alpha)\Gamma(m-\alpha)} \int_0^\infty t^{\alpha-1}(B(t+B)^{-1})^m f dt,\quad f\in \frak{H}.
$$

\begin{theorem}\label{thm3} Let $\rm{Re}\,\alpha>0 $, $A(u)$ satisfy the above conditions and let, in addition, each matrix $A(u)$ be positive definite  for a.~e. $u$.
Let the scalar symbol $\varphi_{K,A}\ge 0$ and the fractional power $\varphi_{K,A}^\alpha$ be the Fourier transform of a function $Q_\alpha\in L^1(\mathbb{R}^n)$.  Then
the fractional power $\mathcal{H}_{K,A}^\alpha$ is also a  Hausdorff operator of the form $\mathcal{H}_{K_{\alpha},A}$ bounded in $L^2(\mathbb{R}^n)$.
\end{theorem}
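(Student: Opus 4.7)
The plan is to mimic the strategy of Theorem~\ref{thm2}, replacing the Dunford--Riesz contour integral by the Martínez--Sanz representation of $B^\alpha$ quoted immediately before the theorem, and then to transport the whole computation to the diagonal model via the unitary equivalence $\mathcal{H}_{K,A}=\mathcal{V}^{-1}M_{\varphi_{K,A}}\mathcal{V}$. The hypothesis $\varphi_{K,A}\ge 0$ is exactly what lets me treat $\mathcal{H}_{K,A}$ as a non-negative self-adjoint operator, so that the Martínez--Sanz formula is applicable.

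First I would fix an integer $m>\mathrm{Re}\,\alpha$ and plug $B=\mathcal{H}_{K,A}$ into the displayed integral formula. For every $t>0$,
\[
(t+\mathcal{H}_{K,A})^{-1}=\mathcal{V}^{-1}(t+M_{\varphi_{K,A}})^{-1}\mathcal{V}=\mathcal{V}^{-1}M_{(t+\varphi_{K,A})^{-1}}\mathcal{V},
\]
and iterating,
\[
\bigl(\mathcal{H}_{K,A}(t+\mathcal{H}_{K,A})^{-1}\bigr)^m=\mathcal{V}^{-1}M_{(\varphi_{K,A}/(t+\varphi_{K,A}))^m}\mathcal{V}.
\]
Pulling $\mathcal{V}$ and $\mathcal{V}^{-1}$ outside the Bochner integral, exactly as in the derivation of \eqref{F(H)}, yields $\mathcal{H}_{K,A}^\alpha=\mathcal{V}^{-1}M_{g_\alpha}\mathcal{V}$ with
\[
g_\alpha(s)=\frac{\Gamma(m)}{\Gamma(\alpha)\Gamma(m-\alpha)}\int_0^\infty t^{\alpha-1}\Bigl(\frac{\varphi_{K,A}(s)}{t+\varphi_{K,A}(s)}\Bigr)^m dt.
\]
Applying the same Martínez--Sanz identity pointwise to the scalar $\varphi_{K,A}(s)\ge 0$ identifies $g_\alpha$ with $\varphi_{K,A}^\alpha$, so $\mathcal{H}_{K,A}^\alpha=\mathcal{V}^{-1}M_{\varphi_{K,A}^\alpha}\mathcal{V}$.

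Next, I would use the hypothesis $\varphi_{K,A}^\alpha=\widehat{Q_\alpha}$ with $Q_\alpha\in L^1(\mathbb{R}^n)$ and copy the construction \eqref{KF} verbatim: set
\[
K_\alpha(u):=(\det A(u))^{1/2}\,\frac{Q_\alpha(\log a(u))}{|J(\log a(u))|}.
\]
Repeating the change of variables $t=\log a(u)$ carried out in \eqref{phiK} gives on the one hand $\varphi_{K_\alpha,A}=\widehat{Q_\alpha}=\varphi_{K,A}^\alpha$, and on the other hand $\int_{\mathbb{R}^n}|\det A(u)|^{-1/2}|K_\alpha(u)|\,du=\|Q_\alpha\|_{L^1}<\infty$, so that $\mathcal{H}_{K_\alpha,A}\in\mathcal{A}_A$ and is bounded on $L^2(\mathbb{R}^n)$. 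The unitary equivalence from Theorem~1 of \cite{faa} then gives $\mathcal{H}_{K_\alpha,A}=\mathcal{V}^{-1}M_{\varphi_{K_\alpha,A}}\mathcal{V}=\mathcal{V}^{-1}M_{\varphi_{K,A}^\alpha}\mathcal{V}=\mathcal{H}_{K,A}^\alpha$, which is the desired conclusion.

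The main obstacle I anticipate is the bookkeeping involved in the first step: namely, justifying the interchange of the Bochner integral over $(0,\infty)$ with conjugation by $\mathcal{V}^{\pm 1}$, and checking that the scalar identity $c_{\alpha,m}\int_0^\infty t^{\alpha-1}(\varphi/(t+\varphi))^m dt=\varphi^\alpha$ holds in a sufficiently uniform manner in $s$ that it can be packaged as the operator identity $M_{g_\alpha}=M_{\varphi_{K,A}^\alpha}$. Both are routine once $\mathcal{H}_{K,A}$ is known to be non-negative self-adjoint (which is guaranteed by $\varphi_{K,A}\ge 0$ via the isometric symbol map), but this is the only place where the proof is not a straight transcription of Theorem~\ref{thm2}; the remaining steps, including the reconstruction of the kernel $K_\alpha$ and the verification of its $L^1$ integrability after the weight $|\det A(u)|^{-1/2}$, are identical with the ones already performed in the preceding subsection.
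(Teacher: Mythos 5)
Your proposal is correct and follows essentially the same route as the paper: non-negativity of $\mathcal{H}_{K,A}$ from $\varphi_{K,A}\ge 0$ via the spectral description, conjugation of the Mart\'inez--Sanz integral by $\mathcal{V}$ to reduce to the multiplication operator $M_{\varphi_{K,A}^\alpha}$, and then the same change-of-variables reconstruction of the kernel $K_\alpha$ from $Q_\alpha$. Your explicit check that $\int_{\mathbb{R}^n}|\det A(u)|^{-1/2}|K_\alpha(u)|\,du=\|Q_\alpha\|_{L^1}$ is a worthwhile detail the paper leaves implicit.
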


\begin{proof} We first note that the  Hausdorff operator $\mathcal{H}_{K,A}$ is normal and its spectrum $\sigma(\mathcal{H}_{K,A})$ equals to the closure of the range of
 $\varphi_{K,A}$ \cite{faa, Forum}. Since $\varphi_{K,A}\ge 0$, we conclude that the operator $\mathcal{H}_{K,A}$ is non-negative in $L^2(\mathbb{R}^n)$.
 Thus, for $f\in L^2(\mathbb{R}^n)$ and every positive integer $m>\rm{Re}\,\alpha$,   we have
\begin{eqnarray*}
 \mathcal{H}_{K,A}^\alpha f=\frac{\Gamma(m)}{\Gamma(\alpha)\Gamma(m-\alpha)} \int_0^\infty t^{\alpha-1}(\mathcal{H}_{K,A}(t+\mathcal{H}_{K,A})^{-1})^m f dt.
\end{eqnarray*}
As in the proof of the previous theorem, $\mathcal{H}_{K, A}=\mathcal{V}^{-1}
M_{\varphi_{K,A}} \mathcal{V}$, where  $\mathcal{V}$ is a unitary operator taking $L^2(\mathbb{R}^n)$ onto $L^2(\mathbb{R}^n,\mathbb{C}^{2^n})$
independent of $K$. Then
 \begin{align*}
(\mathcal{H}_{K,A}(t+\mathcal{H}_{K,A})^{-1})^m&=(\mathcal{V}^{-1}M_{\varphi_{K,A}} \mathcal{V}(t+\mathcal{V}^{-1}
M_{\varphi_{K,A}} \mathcal{V})^{-1})^m\\
&=(\mathcal{V}^{-1} M_{\varphi_{K,A}} (t+M_{\varphi_{K,A}})^{-1}\mathcal{V})^m\\
&=\mathcal{V}^{-1}(M_{\varphi_{K,A}}(t+M_{\varphi_{K,A}})^{-1})^m\mathcal{V}.
\end{align*}

Here the identity
 \begin{align*}
\mathcal{V}^{-1} M_{\varphi_{K,A}}\mathcal{V} (t+\mathcal{V}^{-1}M_{\varphi_{K,A}}\mathcal{V})^{-1}&=\mathcal{V}^{-1} M_{\varphi_{K,A}}\mathcal{V}(\mathcal{V}^{-1} (t+M_{\varphi_{K,A}})\mathcal{V}) ^{-1}\\
&=\mathcal{V}^{-1} M_{\varphi_{K,A}}(t+M_{\varphi_{K,A}}) ^{-1}\mathcal{V}
\end{align*}
is used. Therefore, by \cite[Example 3.3.1]{MS},
 \begin{align}\label{H_to_alpha}
 &\mathcal{H}_{K,A}^\alpha f=\frac{\Gamma(m)}{\Gamma(\alpha)\Gamma(m-\alpha)} \int_0^\infty t^{\alpha-1}
 \mathcal{V}^{-1}(M_{\varphi_{K,A}}(t+M_{\varphi_{K,A}})^{-1})^m\mathcal{V} f\, dt \nonumber\\
&=\mathcal{V}^{-1}\left(\frac{\Gamma(m)}{\Gamma(\alpha)\Gamma(m-\alpha)} \int_0^\infty t^{\alpha-1}
(M_{\varphi_{K,A}}(t+M_{\varphi_{K,A}})^{-1})^m  \mathcal{V} f\, dt \right) \nonumber\\
&=\mathcal{V}^{-1} M_{\varphi_{K,A}}^\alpha\mathcal{V}f=\mathcal{V}^{-1} M_{\varphi_{K,A}^\alpha}\mathcal{V}f.
\end{align}
Since $\varphi_{K,A}^\alpha=\widehat {Q_\alpha}$ for some $Q_\alpha\in L^1(\mathbb{R}^n)$, we can proceed as in the proof of the previous theorem. Indeed, let
\begin{eqnarray}\label{K_alpha}
K_{\alpha}(u):=(\det A(u))^{\frac12}\frac{Q_\alpha(\log a(u))}{|J(\log a(u))|}.
\end{eqnarray}
Then
$$
K_\alpha(b_1(t),\dots,b_n(t))=e^{\frac12\sum\limits_{k=1}^{n}t_k}\,\frac{Q_\alpha(t)}{|J(t)|},
$$
and
\begin{align*}
   \varphi_{K_\alpha,A}(s)&=\int_{\mathbb{ R}^n}K_\alpha(u)a(u)^{-\frac12- \imath s}du\\
&=\int_{\mathbb{ R}^n}K_\alpha(b_1(t),\dots, b_n(t))e^{-\frac12\sum\limits_{k=1}^nt_k}|J(t)| e^{- \imath s\cdot t}dt\\
&=\widehat Q_\alpha(s)=\varphi_{K,A}^\alpha(s).         \end{align*}
By virtue of \eqref{H_to_alpha}, this yields  $\mathcal{H}_{K,A}^\alpha=\mathcal{H}_{K_\alpha,A}$, as desired.
\end{proof}

{\bf Remark 1}. As mentioned in \cite[Corollary 4]{faa}, under the assumptions of Theorem \ref{thm3} the operator $\mathcal{H}_{K,A}$
in $L^2(\mathbb{ R}^n_+)$ is unitary equivalent to the operator of multiplication by $ \varphi_{K,A}$ in $L^2(\mathbb{R}^n_+)$. It follows that
Theorem \ref{thm3} is valid for the space $L^2(\mathbb{R}^n_+)$ as well.

\medskip

{\bf Example 2}. Consider   the Calder\'on operator
$$
(\mathcal{K}f)(x)=\frac{1}{x} \int_0^xf(u)du+\int_x^{\infty}\frac{f(u)}{u}du
$$
in the space $L^2(\mathbb{R}_+)$.
This is a Hausdorff operator with
$$
K(u)=\frac{1}{u\max(1,u)}\chi_{(0,\infty)}(u),\quad A(u)=\frac{1}{u}.
$$

It follows that the  symbol of $\mathcal{K}$ is $\displaystyle{\varphi(s)=\frac1{s^2+\frac14}}$. Therefore, $\sigma(\mathcal{K})=[0,4]$. Further, let  $\rm{Re}\,\alpha>0 $.
It is known (see, e.~g., \cite[Ch. 1, \S 1.12 (40)]{BE}) that  $\displaystyle{\varphi^\alpha(s)=\frac1{(s^2+\frac14)^\alpha}}$ is the Fourier transform of the function
$$
Q_\alpha(t)=\pi^{-\frac12}\Gamma(\alpha)^{-1}|t|^{\alpha-\frac12}{\bf K}_{\alpha-\frac12}\left(\frac{|t|}{2}\right),
$$
where ${\bf K}_\nu$ is the function of  Macdonald.  Thus,
$$
(\mathcal{K}^\alpha f)(x)=\int_{0}^\infty K_\alpha (u)f\left(\frac{x}{u}\right)du,
$$
where the kernel $K_\alpha$ is given by formula \eqref{K_alpha}, with $\det A(u)=a(u)=\frac{1}{u}$,  $J(t)=-e^{-t}$, and the $Q_\alpha$ mentioned above.
In other words, $K_\alpha(u)=u^{-\frac32}Q_\alpha(\log u)$ for $u>0$. In particular,
$$
K_{\frac12+\imath\tau}(u)=\frac{1}{\sqrt{\pi}\Gamma(\frac12+\imath\tau) }u^{-\frac32}|\log u|^{\imath\tau}{\bf K}_{\imath\tau}\left(\frac{|\log u|}{2}\right)
$$
and
$$
(\mathcal{K}^{\frac12+\imath\tau} f)(x)=\frac{1}{\sqrt{\pi}\Gamma(\frac12+\imath\tau) }\int_{0}^\infty u^{-\frac32}|\log u|^{\imath\tau}{\bf K}_{\imath\tau}\left(\frac{|\log u|}{2}\right) f\left(\frac{x}{u}\right)du.
$$
Putting here $v=\frac1u$ and  $x=1$, we arrive at the following  index transform (for this class of integral transforms, see, e.~g., \cite{Yak})
\begin{eqnarray*}
 f^*(\tau):=\frac{1}{\sqrt{\pi}\Gamma(\frac12+\imath\tau) }\int_{0}^\infty v^{-\frac12}|\log v|^{\imath\tau}{\bf K}_{\imath\tau}\left(\frac{|\log v|}{2}\right) f\left(v\right)dv.
\end{eqnarray*}

\medskip







\section*{Competing Interests}

The authors have no relevant financial or non-financial interests to disclose.

\medskip

\section*{Data availability}

This manuscript has no associated data.

\bigskip

\end{document}